\newcommand{\R}{\mathbb{R}}
\newcommand{\N}{\mathbb{N}}
\newcommand{\Z}{\mathbb{Z}}
\newcommand{\C}{\mathbb{C}}
\newcommand{\ds}{\displaystyle}
\newtheorem{defin}{Definition}
\newtheorem{lemme}{Lemma}
\newtheorem{coro}{Corollary}
\newtheorem{theorem}{Theorem}
\begin{document}

\title [Tur\'an Type Inequalities ]{ Tur\'an Type Inequalities for Dunkl Kernel and $q$-Dunkl Kernel \\}%

\author[  K. Mehrez, M. Ben Said, J. El Kamel]{   Khaled Mehrez, Mariem Ben Said  and Jamel El Kamel  }
 \address{Jamel El Kamel. D\'epartement de Math\'ematiques fsm. Monastir 5000, Tunisia.}
 \email{jamel.elkamel@fsm.rnu.tn}
 \address{Khaled Mehrez. D\'epartement de Math\'ematiques ISSAT.Kasserine , Tunisia.}
 \email{k.mehrez@yahoo.fr}
 \address{Mariem Ben Said . D\'epartement de Math\'ematiques ISMAIK.Kairouan 3100, Tunisia.}
 \email{bensaid.mery@gmail.com}
\begin{abstract}
We prove tur\'an type inequalities for Dunkl kernel.
 We provide a $q$-integral representation for the $q$-Dunkl kernel. Using a $q$-version of Schwartz inequality, 
 we get a tur\'an type inequalities for $q$-Dunkl kernel.
 \end{abstract}
\maketitle
{\it keywords:}  Dunkl kernel;  $q$-Dunkl kernel; Tur\'an type inequalities.\\
MSC (2010) 33C10, 33C52,  33D05, 39B62.\\
\section{Introduction}
\noindent In 1941, Paul Tur\'an  established the famous Tur\'an inequality for Legendre polynomials.
$$ \displaystyle P_{n-1}(x) P_{n+1}(x) <\left|P_n(x) \right|^2,\quad \left|x\right|<1, \quad n=1,2,...$$
In 1948, Gabor Szeg\"o  presented elegant proofs of Tur\'an inequality for Legendre polynomials 
and extented the result to Gegenbauer, Laguerre and Hermite polynomials.\\
After 1948 analogous results were obtained by  several authors for a large class of orthogonal 
polynomials and special functions (for example Bessel, $q$-Bessel, modified Bessel, polygamma, 
Riemann Zeta functions). In 1981 one of the PhD student of P. Tur\'an, L. Alp\'ar \cite{Al} in Tur\'ans 
bibliography mentioned that the above Tur\'an inequality had a wide ranging effect. Actually, 
the Tur\'an type inequalities have a more extensive literature and recently the results have 
been applied in problems  arising from many fields such as  information theory, economic theory and biophysics. Recently it has been shown by \'A. Baricz \cite{B1,B2,B3} that the Gauss and Kummer hypergeometric functions, 
as well as the generalized hypergeometric functions satisfy some Tur\'an type inequalities. 
For deep study about  this subject we refer to \cite{AF,B2,B3,GR,KC,KoT}.\\

\noindent In this paper our aim is to provide some new Tur\'an type inequalities for
Dunkl kernel and $q$-Dunkl kernel.\\

\noindent Our paper is organized as follows : in section 2, we present some preliminary  results 
and notations that will be useful in the sequel. In section 3, using the series expansion of the 
Dunkl kernel $E_\nu(\lambda,x)$, we prove that the function $\nu\longmapsto E_\nu(\lambda,x)$ is log-convex 
on $(0,\infty)$. In particular we deduce some Tur\'an type inequalities for the 
Dunkl kernel. Using an integral representation, we show analogous results for 
 the normalized Dunkl kernel $\widetilde{E_\nu}(\lambda,x)$.
In section 4, using the series expansion of the 
q-Dunkl kernel $E_\nu(x,q^2)$, we prove that the function $\nu\longmapsto E_\nu(x,q^2)$ is log-convex 
on $]0,\infty[$, in particular we deduce some Tur\'an type inequalities for the 
q-Dunkl kernel. We establish a q-integral representation for  $q$-Dunkl kernel . Using a $q$-version of 
Schwartz inequality, we deduce some Tur\'an type inequalities for the the normalized $q$-Dunkl kernel. As application, in section 5, we give some hyperbolic Jordan's type inequalities for hyperbolic functions.

\section{Notations and preliminaries}
\noindent The Euler  gamma function $\Gamma(z)$ is defined for $\mathcal{R}(z)>0$, by 
$$\displaystyle \Gamma(z)=\int_0^\infty t^{z-1}e^{-t} dt.$$
The psi(or digamma) function $\psi(z)$ is the logarithmic derivative of $\Gamma(z)$, that is, 
 $$\displaystyle \psi(z)=\frac{\Gamma'(z)}{\Gamma(z)}.$$
It's well known tha the  digamma function satisfies 
$$\displaystyle \psi(x)=-\gamma+(x-1) \sum_{k\geq 0} \frac{1}{(k+1)}\frac{1}{(x+k)},\quad x>0,$$
where $\gamma$ is the Euler constant. Thus the digamma function is concave in $]0,\infty[$.\\

\noindent Throughout the section 4, we will fix $q \in ]0,1[$.
 We recall some usual notions and notations used in the
$q$-theory (see \cite{GR} and \cite{KC}).\\
We refer to the book by  G. Gasper and M. Rahman \cite{GR}, for
the notations,  definitions and  properties of the $q$-shifted
factorials and $q$- hypergeomtric functions.\\
We note 
$$\displaystyle\R_{q,+} = \{q^n: n\in \Z\}.
$$
The $q$-derivative $D_qf$  of a function $f$ is given by
$$\displaystyle
(D_qf)(x)={{f(x)-f(qx)}\over{(1-q)x}},~~  {\rm if}~~ x\not=0,
$$
$(D_qf)(0)=f'(0)$  provided $f'(0)$ exists. If  $f$ is
differentiable, then $(D_qf)(x)$  tends to $f'(x)$ as $q$ tends to
1.\\
 The $q$-Jackson integrals from $0$ to $a$,  from $0$ to $\infty$ and in a generic interval $[a,b]$ are
defined by (see \cite {Jac})
$$\displaystyle
\int_0^{a}{f(x)d_qx} =(1-q)a\sum_{n=0}^{\infty}{f(aq^n)q^n},
$$
$$\displaystyle
\int_0^{\infty}{f(x)d_qx}
=(1-q)\sum_{n=-\infty}^{\infty}{f(q^n)q^n},
$$
provided the sums converge absolutely, and
$$
\int_a^{b}{f(x)d_qx} =\int_0^{b}{f(x)d_qx}-\int_0^{a}{f(x)d_qx}.
$$
 The improper integral is defined in the following way
(see \cite {KC})
$$\displaystyle
\int_0^{{\infty}\over A}{f(x)d_qx}
  =(1-q)\sum_{n=-\infty}^{\infty}{f\left({{q^n}\over A}\right){{q^n}\over A}
  }.
$$
 We remark that for $n\in \Z$, we have
$$\displaystyle
\int_0^{{\infty}\over {q^n}}{f(x)d_qx}=\int_0^{\infty}{f(x)d_qx}.
$$

\indent $q$-Analogues of the exponential functions ( see \cite
{GR, KC}) is given by:
$$\displaystyle
e(q,z)=_1{\varphi}_0(0;-;q,z)=\sum_{n=0}^{\infty}{(1-q)^n\over{(q;q)_n}}z^n
= {1\over{(z;q)_{\infty}}}.
$$
For the convergence of the  series, we need $\ds \mid
 z\mid<1$; however, due to its product
 representation, $ \ds e_q$ is continuable to a meromorphic function on $\C$ and has  simple
 poles at $\ds z= q^{-n},~~n\in\N$.
 $$\displaystyle E(q;z)=\sum_{n=0}^\infty q^{\frac{n(n-1)}{2}} \frac{z^n}{(q;q)_n}=\prod_{k=0}^\infty (1+z^k), \quad z\in \mathbb{C}.$$
 
\noindent Jackson \cite {Jac} defined a $q$-analogous of the Gamma
function by
$$\displaystyle
\Gamma_q (x) ={(q;q)_{\infty}\over{(q^x;q)_{\infty}}}(1-q)^{1-x} ,
\qquad x\not={0},{-1},{-2},\ldots.
$$
It is well known that it satisfies:
$$\displaystyle
\Gamma_q(x+1)=\frac{1-q^x}{1-q} \Gamma_q(x),\quad
\Gamma_q(1)=1~~~~{\rm and}~~~~
\lim_{q\rightarrow1^-}\Gamma_q(x)=\Gamma(x),~~ \Re(x)>0.
$$
The $q$-psi (or $q$-digamma) function is defined as the Logarithmic $q$-derivative of the $q$-gamma function:      
$$\psi_{q}(x)=\frac{\Gamma_{q}'(x)}{\Gamma_{q}(x)},$$ 
and satisfies:
$$\psi_{q}(x)=-Log(1-q)+Log(q)\sum_{n=1}^{\infty}\frac{q^{nx}}{1-q^{n}}; \quad q\in ]0,1[,$$
where $Log(x)$ means $Log_{e}(x).$

The $q$-modified Bessel function of first kind is defined by ( see \cite{BGR} ) :
$$\displaystyle
I_\nu^{(1)}\left((1-q^2)z;q^2\right)=\frac{1}{\Gamma_{q^2}(\nu+1)}\sum_{k=0}^\infty 
\frac{(1-q^2)^{2k}z^{2k+\nu}}{2^{2k+\nu}(q^2;q^2)_k(q^{2\nu+1};q^2)_k},\quad \left|z\right|<\frac{1}{1-q^2}.
$$
The normalized $q$-modified Bessel function of the first kind is defined by :

$$\displaystyle
\mathcal{I}_\nu(z;q^2)=(1+q)^\nu \frac{\Gamma_{q^2}(\nu+1)}{z^\nu} I_\nu^{(1)}\left(2(1-q)z;q^2\right).
$$
\section{Tur\'an Type Inequalities for Dunkl kernel}
We recall that the Dunkl operator is defined for $f\in \mathcal{C}^1(\mathbb{R})$ by :
\begin{equation}
 T_\nu f(x)=f'(x)+\frac{\nu}{x}\left[\frac{f(x)-f(-x)}{2}\right], \quad \nu>0.
\end{equation}
For $\lambda \in\mathbb{C}$, the Dunkl kernel $E_\nu(\lambda,.)$ on $\mathbb{R}$ was introduced by C. Dunkl in \cite{D2}  and is given by :
 \begin{equation}
 E_\nu(\lambda,x)=j_{\nu-\frac{1}{2}}(i\lambda x)+\frac{\lambda x}{2\nu+1}j_{\nu+\frac{1}{2}}(i\lambda x),
\end{equation}
where $j_\alpha$ is the normalized Bessel function of the first kind of order $\alpha$ . The Dunkl
kernel $E_\nu(\lambda,x)$ is the unique solution on $\mathbb{R}$ of the initial problem associated to Dunkl operator : 
\begin{equation}
\displaystyle T_\nu f(x)=\lambda f(x), \quad f(0)=1,\quad x\in \mathbb{R}.
\end{equation}

\begin{lemme}{\rm(see \cite{S})}. For $\lambda,x\in \mathbb{R}$ and $\nu>0$,
 the  Dunkl kernel $E_\nu(\lambda,x)$ admits the  series expansions 
\begin{equation}
 E_\nu(\lambda,x)=\sum_{n=0}^\infty \frac{(\lambda x)^n}{b_n(\nu)},
 \end{equation}
where 
\begin{equation}
 \displaystyle b_{2n}(\nu)=2^{2n}n!\frac{\Gamma (n+\nu+1)}{\Gamma (\nu+1)},\qquad b_{2n+1}(\nu)=2(\nu+1) b_{2n}(\nu+1).
 \end{equation}
\end{lemme}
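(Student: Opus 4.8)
The plan is to insert the power-series expansion of the normalized Bessel function directly into the defining formula (2) and to collect the even and odd powers of $\lambda x$ separately. Recall that the normalized Bessel function of the first kind of order $\alpha$ has the expansion
$$j_\alpha(w) = \Gamma(\alpha+1)\sum_{k=0}^\infty \frac{(-1)^k}{k!\,\Gamma(\alpha+k+1)}\left(\frac{w}{2}\right)^{2k},$$
so that $j_\alpha(0)=1$. Setting $w = i\lambda x$ turns every factor into $\left((i\lambda x)/2\right)^{2k}=(-1)^k(\lambda x)^{2k}/2^{2k}$, and this sign cancels the alternating sign of the series; hence $j_\alpha(i\lambda x)$ becomes a series with nonnegative coefficients supported on the even powers $(\lambda x)^{2k}$.

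First I would expand $j_{\nu-\frac12}(i\lambda x)$, which contributes exactly the even part of the kernel, namely terms in $(\lambda x)^{2k}$. Next I would expand the second summand $\frac{\lambda x}{2\nu+1}\,j_{\nu+\frac12}(i\lambda x)$; because of the prefactor $\lambda x$, this contributes only the odd powers $(\lambda x)^{2k+1}$. Since the two contributions occupy disjoint sets of exponents, adding them yields a single power series $\sum_{n\ge 0}(\lambda x)^n/b_n(\nu)$ in which the even-indexed coefficients come entirely from the first term and the odd-indexed ones entirely from the second. Reading off the coefficient of $(\lambda x)^{2n}$ and of $(\lambda x)^{2n+1}$ then produces closed forms for $b_{2n}(\nu)$ and $b_{2n+1}(\nu)$ as ratios of Gamma functions, after invoking the Pochhammer identity $(\alpha+1)_k=\Gamma(\alpha+k+1)/\Gamma(\alpha+1)$.

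The remaining step, which I expect to be the main obstacle, is to massage these Gamma-function expressions into the exact shape asserted in (5). For the even coefficients this amounts to applying the recurrence $\Gamma(z+1)=z\Gamma(z)$ until the ratio is written as $2^{2n}n!\,\Gamma(n+\nu+1)/\Gamma(\nu+1)$; for the odd coefficients one factors out $2\nu+1$ and recognizes the leftover product as $b_{2n}$ evaluated at the shifted parameter $\nu+1$, which gives precisely the relation $b_{2n+1}(\nu)=2(\nu+1)b_{2n}(\nu+1)$. The delicate point is the bookkeeping of the half-integer arguments $\nu\pm\tfrac12$ against the integer shifts appearing in (5), and I would control it by systematically applying $\Gamma(z+1)=z\Gamma(z)$ and, wherever products with step two arise, the Legendre duplication formula $\Gamma(z)\Gamma(z+\tfrac12)=2^{1-2z}\sqrt\pi\,\Gamma(2z)$, until both sides share the same base Gamma factor.

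Alternatively, and perhaps more transparently, I could bypass the Bessel functions and use the characterization (3): writing $E_\nu(\lambda,x)=\sum_n a_n x^n$ with $a_0=1$, substituting into $T_\nu f=\lambda f$, and matching coefficients of $x^m$ yields a two-term recurrence in which the difference part of $T_\nu$ contributes only at the even indices. Solving this recurrence separately along the even and odd indices reproduces $a_n=1/b_n(\nu)$ and makes the split recurrence for $b_{2n+1}$ versus $b_{2n}$ appear automatically, once again leaving only the Gamma-function normalization as the nontrivial computation.
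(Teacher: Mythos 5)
The paper does not actually prove this lemma; it is quoted verbatim from the reference [S], so there is no internal proof to compare against. Your strategy --- expand the two normalized Bessel functions in (2), observe that the $(-1)^k$ from $(i\lambda x)^{2k}$ cancels the alternating sign, and read off the even and odd coefficients separately --- is the standard and essentially the only natural derivation, and your alternative route through the eigenvalue problem (3) is equally legitimate in principle.

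However, the step you defer as ``the main obstacle'' is not a routine massage and your proposed tools will not resolve it. Substituting $j_{\nu-\frac12}(i\lambda x)=\Gamma(\nu+\tfrac12)\sum_k \frac{1}{k!\,\Gamma(\nu+k+\frac12)}\bigl(\tfrac{\lambda x}{2}\bigr)^{2k}$ into the paper's formula (2) yields
$b_{2n}(\nu)=2^{2n}n!\,\Gamma(n+\nu+\tfrac12)/\Gamma(\nu+\tfrac12)$, i.e.\ the Pochhammer symbol $(\nu+\tfrac12)_n$, whereas (5) asserts $(\nu+1)_n$; no amount of $\Gamma(z+1)=z\Gamma(z)$ or Legendre duplication converts one into the other, since they are genuinely different polynomials in $\nu$ already for $n=1$. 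The same mismatch occurs on your alternative route: the operator (1) as printed, with the factor $\nu/x$, produces $b_{2n}=2^{2n}n!\,\Gamma(n+\tfrac{\nu+1}{2})/\Gamma(\tfrac{\nu+1}{2})$, a third answer. The resolution is that (1), (2) and (5) use mutually inconsistent normalizations: the coefficients (5) correspond to the convention of [S], namely $E_\nu(\lambda,x)=j_\nu(i\lambda x)+\frac{\lambda x}{2(\nu+1)}\,j_{\nu+1}(i\lambda x)$ and the operator with factor $(2\nu+1)/x$ in front of $\frac{f(x)-f(-x)}{2x}$-type term. Once you replace (2) by that formula, your computation closes in two lines with no duplication formula needed: the even coefficient is $\Gamma(\nu+1)/(2^{2n}n!\,\Gamma(n+\nu+1))$ directly, and the odd one is $\frac{1}{2(\nu+1)}\cdot\frac{1}{b_{2n}(\nu+1)}$, which is exactly the relation $b_{2n+1}(\nu)=2(\nu+1)b_{2n}(\nu+1)$. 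So your proof is correct in outline, but as written it would stall at precisely the point you hoped to finesse, because the difficulty is a typographical inconsistency in the paper's definition rather than a Gamma-function identity.
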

\begin{lemme} For $\lambda,x\in \mathbb{R}$ and $\nu>0$, the  Dunkl kernel $E_\nu(\lambda,x)$ admits the 
 following integral representation 
 \begin{equation}
  \displaystyle E_\nu(\lambda,x)=c(\nu)\int_{-1}^1 e^{\lambda xt} (1-t^2)^{\nu-1}(1+t) dt,
  \end{equation}
 where $\displaystyle c(\nu)=\frac{\Gamma(\nu+\frac{1}{2})}{\Gamma(\frac{1}{2})\Gamma(\nu)}$.
\end{lemme}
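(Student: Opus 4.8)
The plan is to verify the identity by expanding the integrand in powers of $\lambda x$ and comparing the resulting series with the expansion (4) supplied by Lemma 1. Writing $e^{\lambda x t}=\sum_{n\ge 0}\frac{(\lambda x)^n}{n!}t^n$ and noting that on the compact interval $[-1,1]$ the convergence is uniform (the weight $(1-t^2)^{\nu-1}(1+t)$ is integrable for $\nu>0$), I may interchange summation and integration to obtain
$$c(\nu)\int_{-1}^1 e^{\lambda x t}(1-t^2)^{\nu-1}(1+t)\,dt=c(\nu)\sum_{n=0}^\infty\frac{(\lambda x)^n}{n!}\,A_n,\qquad A_n=\int_{-1}^1 t^n(1-t^2)^{\nu-1}(1+t)\,dt.$$
It then suffices to show that $c(\nu)A_n/n!=1/b_n(\nu)$ for every $n$.

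The key simplification comes from parity. Splitting $A_n=\int_{-1}^1 t^n(1-t^2)^{\nu-1}\,dt+\int_{-1}^1 t^{n+1}(1-t^2)^{\nu-1}\,dt$ and using that $(1-t^2)^{\nu-1}$ is even, exactly one of the two integrands is even for each fixed $n$, so precisely one term survives: the first when $n$ is even, the second when $n$ is odd. Each surviving integral has the form $2\int_0^1 t^{2k}(1-t^2)^{\nu-1}\,dt$, which the substitution $u=t^2$ turns into a Beta integral,
$$2\int_0^1 t^{2k}(1-t^2)^{\nu-1}\,dt=\int_0^1 u^{k-\frac12}(1-u)^{\nu-1}\,du=B\!\left(k+\tfrac12,\nu\right)=\frac{\Gamma\!\left(k+\frac12\right)\Gamma(\nu)}{\Gamma\!\left(k+\nu+\frac12\right)}.$$
Thus $A_{2m}$ and $A_{2m+1}$ become explicit ratios of Gamma functions.

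It remains to reconcile these ratios with the coefficients $1/b_n(\nu)$, and this Gamma-function bookkeeping is where I expect the real work to lie. The factor $1/n!$ carries the factorials $(2m)!$ and $(2m+1)!$, which must be rewritten through the Legendre duplication formula $\Gamma(2z)=\frac{2^{2z-1}}{\sqrt{\pi}}\Gamma(z)\Gamma(z+\frac12)$; in particular $\frac{\Gamma(m+\frac12)}{(2m)!}=\frac{\Gamma(\frac12)}{2^{2m}m!}$, after which the even coefficient collapses to the desired value. The odd case is treated identically and then matched against the recursion $b_{2n+1}(\nu)=2(\nu+1)b_{2n}(\nu+1)$, using $\Gamma(z+1)=z\Gamma(z)$ to absorb the factor $2(\nu+1)$. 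The main obstacle is precisely this careful tracking of half-integer Gamma values across the even and odd cases; no analytic difficulty beyond the routine justification of the term-by-term integration arises. As a cross-check one may instead recognize the even subseries as $j_{\nu-\frac12}(i\lambda x)$ and the odd subseries as $\frac{\lambda x}{2\nu+1}j_{\nu+\frac12}(i\lambda x)$, thereby recovering the defining representation (2) directly; alternatively, the identity can be established by differentiating the right-hand side under the integral sign and checking that it solves the initial-value problem (3), whose solution is unique.
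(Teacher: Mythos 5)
The paper states this lemma without proof, so there is nothing to compare against on that side; your attempt has to stand on its own. Your method (expand $e^{\lambda xt}$, integrate term by term, reduce the moments $A_n$ to Beta integrals, apply the duplication formula) is the standard and correct way to verify such a representation, and every step you actually carry out is right: $A_{2m}=\frac{\Gamma(m+\frac12)\Gamma(\nu)}{\Gamma(m+\nu+\frac12)}$, $A_{2m+1}=\frac{\Gamma(m+\frac32)\Gamma(\nu)}{\Gamma(m+\nu+\frac32)}$, and $\frac{\Gamma(m+\frac12)}{(2m)!}=\frac{\Gamma(\frac12)}{2^{2m}m!}$.

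The problem is exactly the step you defer as ``bookkeeping'' and assert ``collapses to the desired value'': it does not. Carrying it out gives
\begin{equation*}
c(\nu)\,\frac{A_{2m}}{(2m)!}=\frac{\Gamma(\nu+\tfrac12)}{2^{2m}m!\,\Gamma(m+\nu+\tfrac12)},
\qquad\text{whereas}\qquad
\frac{1}{b_{2m}(\nu)}=\frac{\Gamma(\nu+1)}{2^{2m}m!\,\Gamma(m+\nu+1)},
\end{equation*}
and already for $m=1$ these are $\frac{1}{4(\nu+\frac12)}$ versus $\frac{1}{4(\nu+1)}$. What your computation actually produces is the $n$-th Taylor coefficient of the kernel as defined in (2), namely $1/b_n(\nu-\tfrac12)$ in the notation of (5): the even part sums to $j_{\nu-\frac12}(i\lambda x)$ and the odd part to $\frac{\lambda x}{2\nu+1}j_{\nu+\frac12}(i\lambda x)$. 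In other words, the route you relegate to a ``cross-check'' is the one that works, and the primary matching you propose against (4)--(5) fails, because Lemma 1's coefficients are normalized for the index $\nu$ convention while definition (2) and the integral representation of Lemma 2 are normalized for index $\nu-\tfrac12$ (an internal inconsistency of the paper, but one your proof must not paper over by asserting the identity $c(\nu)A_n/n!=1/b_n(\nu)$ unverified). To make the proof correct, state explicitly that you are verifying agreement with (2), carry out the two coefficient identifications above, and drop the claim of a match with (5); similarly, your ODE alternative needs care, since the weight $(1-t^2)^{\nu-1}$ pairs with the operator $f'(x)+\frac{2\nu}{x}\bigl[\frac{f(x)-f(-x)}{2}\bigr]$ rather than with $T_\nu$ as written in (1).
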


\begin{theorem}
For $\lambda, x\in \mathbb{R}^+$, the function $\nu\longmapsto  E_\nu(\lambda,x)$ is log-convex on $]0,\infty[$\\
i.e 
\begin{equation}E_{\alpha \nu_1+(1-\alpha)\nu_2}(\lambda,x)\leq \left[E_{\nu_1}(\lambda,x)\right]^\alpha
\left[E_{\nu_2}(\lambda,x)\right]^{1-\alpha},\quad \forall \nu_1>0, \nu_2>0, \quad \forall\alpha\in [0,1].
\end{equation}
In particular, we get  Tur\'an type inequalities for Dunkl kernel :

\begin{equation} \left[E_{\nu+1}(\lambda,x)\right]^2\leq E_\nu(\lambda,x)E_{\nu+2}(\lambda,x),\quad \forall \nu>0.
\end{equation}

\end{theorem}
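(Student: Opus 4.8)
The plan is to work directly from the series expansion in Lemma 1, namely $E_\nu(\lambda,x)=\sum_{n\ge 0}(\lambda x)^n/b_n(\nu)$, rather than from the integral representation (whose prefactor $c(\nu)$ is not manifestly log-convex). Since $\lambda,x\ge 0$ and $b_n(\nu)>0$, the kernel is a sum of nonnegative terms $f_n(\nu)=(\lambda x)^n/b_n(\nu)$, so it suffices to establish log-convexity in $\nu$ termwise and then invoke the fact that the class of log-convex functions is stable under (possibly infinite) summation. Concretely, writing $\nu=\alpha\nu_1+(1-\alpha)\nu_2$ and applying Hölder's inequality with exponents $1/\alpha$ and $1/(1-\alpha)$ to the termwise bound $f_n(\nu)\le f_n(\nu_1)^{\alpha}f_n(\nu_2)^{1-\alpha}$ gives $\sum_n f_n(\nu)\le\big(\sum_n f_n(\nu_1)\big)^{\alpha}\big(\sum_n f_n(\nu_2)\big)^{1-\alpha}$, which is exactly inequality (7). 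Once (7) is in hand, the Tur\'an inequality (8) follows by taking $\alpha=\tfrac12$, $\nu_1=\nu$, $\nu_2=\nu+2$ (so that $\alpha\nu_1+(1-\alpha)\nu_2=\nu+1$) and squaring.

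First I would rewrite each reciprocal coefficient as a ratio of gamma functions using (5). For an even index, $1/b_{2m}(\nu)=\Gamma(\nu+1)/\big(2^{2m}m!\,\Gamma(\nu+m+1)\big)$. For an odd index, combining $b_{2m+1}(\nu)=2(\nu+1)b_{2m}(\nu+1)$ with $\Gamma(\nu+2)=(\nu+1)\Gamma(\nu+1)$ collapses the factor $(\nu+1)$ and yields $1/b_{2m+1}(\nu)=\Gamma(\nu+1)/\big(2^{2m+1}m!\,\Gamma(\nu+m+2)\big)$. In both cases the reciprocal coefficient has the uniform shape $f_n(\nu)=C_n\,\Gamma(\nu+1)/\Gamma(\nu+1+j_n)$, where $C_n>0$ is independent of $\nu$ and $j_n$ is a nonnegative integer ($j_n=m$ in the even case, $j_n=m+1$ in the odd case).

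The key step is the termwise log-convexity. Taking logarithms, $\log f_n(\nu)=\log C_n+\log\Gamma(\nu+1)-\log\Gamma(\nu+1+j_n)$, so its second derivative in $\nu$ equals $\psi'(\nu+1)-\psi'(\nu+1+j_n)$. Since the digamma function is concave on $(0,\infty)$, as recorded in Section 2, $\psi'$ is nonincreasing, and hence $\psi'(\nu+1)-\psi'(\nu+1+j_n)\ge 0$ for every $j_n\ge 0$. Thus each $\nu\mapsto 1/b_n(\nu)$ is log-convex, and the Hölder summation argument above promotes this to log-convexity of $E_\nu(\lambda,x)$ itself.

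The main obstacle I anticipate is essentially technical rather than conceptual: justifying the application of Hölder's inequality to the infinite series, which requires the absolute convergence of $E_\nu(\lambda,x)$, $E_{\nu_1}(\lambda,x)$ and $E_{\nu_2}(\lambda,x)$. This is guaranteed for all real $\lambda,x$ by Lemma 1. Once convergence is secured, the whole argument rests on a single monotonicity fact, namely that $\psi'$ is decreasing (equivalently, that $\psi$ is concave), so I would make sure to state and use that property explicitly.
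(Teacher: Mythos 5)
Your proof is correct and follows essentially the same route as the paper: termwise log-convexity of $\nu\mapsto 1/b_n(\nu)$ from the series expansion, using the fact that $\psi'$ is decreasing, followed by the stability of log-convexity under summation. Your simplification of the odd coefficients via $\Gamma(\nu+2)=(\nu+1)\Gamma(\nu+1)$ is just a cleaner form of the paper's computation $\psi'(\nu+2)-\psi'(\nu+n+2)+\tfrac{1}{(\nu+1)^2}\ge 0$, so there is no substantive difference.
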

\begin{proof}
To show the Log-convexity of the function $\nu\longmapsto  E_\nu(\lambda,x)$, we just need to show the Log-convexity of each term of its  series expansion and then, we use the fact that sums of Log-convex functions are Log-convex too.\\
 Let $n\geq 0$, since the function $\psi$ is concave on $]0,\infty[$, we get :
$$\frac{d^{2}}{d\nu^{2}}[Log(\frac{1}{b_{2n}(\nu)})]= \psi^{'}(\nu+1)-\psi^{'}(\nu+n+1)\geq 0 $$
and $$\frac{d^{2}}{d\nu^{2}}[Log(\frac{1}{b_{2n+1}(\nu)})]= \psi^{'}(\nu+2)-\psi^{'}(\nu+n+2)+\frac{1}{(\nu+1)^{2}}\geq 0 .$$
\\
\noindent Thus for $\nu_{1}, \nu_{2}>0,\quad \alpha \in [0,1]$ :
$$ {E}_{\alpha \nu_1+(1-\alpha)\nu_2}(\lambda,x)\leq \left[{E}_{\nu_1}(\lambda,x)\right]^\alpha
\left[{E}_{\nu_2}(\lambda,x)\right]^{1-\alpha}.$$
\noindent In particular; for $\nu>0,\quad \nu_{1}=\nu ,\quad \nu_{2}=\nu+2 $ and $ \alpha =\frac{1}{2}$ , Tur\'an type inequality for Dunkl Kernel is deduced.
\end{proof}
\begin{defin} For $\lambda,x\in \mathbb{R}$ and $\nu>0$,  the normalized Dunkl kernel is defined by 
\begin{equation}
\widetilde{E_\nu}(\lambda,x)=\frac{1}{c_\nu}E_\nu(\lambda,x),
\end{equation}
where $c_\nu$ is given by (7).
\end {defin}
\begin{theorem}
For $\lambda, x \in \mathbb{R}$ , the function $\nu\longmapsto  \widetilde{E_\nu}(\lambda,x)$ is log-convex on $]0,\infty[$\\
i.e 
\begin{equation}
\widetilde{E}_{\alpha \nu_1+(1-\alpha)\nu_2}(\lambda,x)\leq \left[\widetilde{E}_{\nu_1}(\lambda,x)\right]^\alpha
\left[\widetilde{E}_{\nu_2}(\lambda,x)\right]^{1-\alpha},\quad \forall \nu_1>0, \nu_2>0, \quad \forall\alpha\in [0,1].
\end{equation}
In particular; we get  Tur\'an type inequalities for normalized Dunkl kernel :

\begin{equation} \left[\widetilde{E}_{\nu+1}(\lambda,x)\right]^2\leq\widetilde{E}_\nu(\lambda,x)\widetilde{E}_{\nu+2}(\lambda,x),\quad \forall \nu>0.
\end{equation}

\end{theorem}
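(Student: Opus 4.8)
The plan is to start from the integral representation (6) and observe that the constant appearing there is exactly $c(\nu)$, so dividing through gives a clean integral formula for the normalized kernel in which the normalizing constant disappears and the entire $\nu$-dependence is carried by a single factor:
$$\widetilde{E_\nu}(\lambda,x)=\int_{-1}^1 e^{\lambda x t}(1-t^2)^{\nu-1}(1+t)\,dt.$$
On $[-1,1]$ the integrand is nonnegative, since $1+t\ge 0$, $(1-t^2)^{\nu-1}\ge 0$ and $e^{\lambda x t}>0$; moreover the integral converges for every $\nu>0$ (near $t=1$ the power $(1-t)^{\nu-1}$ is integrable because $\nu>0$, and near $t=-1$ the extra factor $1+t$ improves the exponent). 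These positivity and integrability facts are precisely what legitimize raising the integrand to fractional powers in the next step, and I expect them to be the only point requiring genuine care.

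Next I would establish the log-convexity (10) directly by H\"older's inequality. Fix $\nu_1,\nu_2>0$ and $\alpha\in[0,1]$, and put $\nu=\alpha\nu_1+(1-\alpha)\nu_2$. Writing $F(t)=e^{\lambda x t}(1+t)\ge 0$ for the $\nu$-free part and using $F=F^{\alpha}F^{1-\alpha}$ together with $(1-t^2)^{\nu-1}=[(1-t^2)^{\nu_1-1}]^{\alpha}[(1-t^2)^{\nu_2-1}]^{1-\alpha}$, the integrand factorizes as
$$F(t)(1-t^2)^{\nu-1}=\left[F(t)(1-t^2)^{\nu_1-1}\right]^{\alpha}\left[F(t)(1-t^2)^{\nu_2-1}\right]^{1-\alpha}.$$
Applying H\"older's inequality with the conjugate exponents $p=1/\alpha$ and $q=1/(1-\alpha)$ then gives
$$\int_{-1}^1 F(t)(1-t^2)^{\nu-1}\,dt\le\left(\int_{-1}^1 F(t)(1-t^2)^{\nu_1-1}\,dt\right)^{\alpha}\left(\int_{-1}^1 F(t)(1-t^2)^{\nu_2-1}\,dt\right)^{1-\alpha},$$
which is exactly $\widetilde{E}_{\nu}(\lambda,x)\le[\widetilde{E}_{\nu_1}(\lambda,x)]^{\alpha}[\widetilde{E}_{\nu_2}(\lambda,x)]^{1-\alpha}$, i.e. inequality (10).

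Finally I would specialize: taking $\nu_1=\nu$, $\nu_2=\nu+2$ and $\alpha=\frac{1}{2}$ gives $\alpha\nu_1+(1-\alpha)\nu_2=\nu+1$, so (10) reads $\widetilde{E}_{\nu+1}(\lambda,x)\le[\widetilde{E}_{\nu}(\lambda,x)]^{1/2}[\widetilde{E}_{\nu+2}(\lambda,x)]^{1/2}$, and squaring recovers the Tur\'an type inequality (11). At $\alpha=\frac{1}{2}$ the H\"older step is just the Cauchy--Schwarz inequality, which parallels the $q$-case treated later. As a self-checking alternative that avoids the integral entirely, one could instead combine Theorem 1 with the fact that $\nu\mapsto 1/c(\nu)$ is itself log-convex: since the trigamma function $\psi'$ is decreasing on $(0,\infty)$, one has $\frac{d^2}{d\nu^2}\log\frac{1}{c(\nu)}=\psi'(\nu)-\psi'(\nu+\frac{1}{2})\ge 0$, and $\widetilde{E_\nu}=E_\nu/c(\nu)$ is then a product of two log-convex functions, hence log-convex. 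I expect the H\"older argument to be the cleaner and more self-contained route, so I would present it as the main proof.
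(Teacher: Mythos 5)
Your proposal is correct and follows essentially the same route as the paper: both divide the integral representation (6) by $c(\nu)$, factor the integrand as an $\alpha$/$(1-\alpha)$ product, apply H\"older's inequality, and then specialize $\nu_1=\nu$, $\nu_2=\nu+2$, $\alpha=\tfrac{1}{2}$. Your write-up is in fact slightly more careful (you keep the exponent $\nu-1$ where the paper's displayed computation drops the $-1$, and you justify integrability near $t=\pm1$), and your alternative via log-convexity of $1/c(\nu)$ is a valid bonus, though it would inherit the restriction $\lambda x\ge 0$ from Theorem 1 rather than covering all $\lambda,x\in\mathbb{R}$.
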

\begin{proof}
Using the integral  representation  (6) of the Dunkl Kernel and H\"older inequality we have,
for $\nu_{1} , \nu_{2}>0$ and $\alpha \in ]0,1[$
$$
\widetilde{E}_{\alpha \nu_1+(1-\alpha)\nu_2}(\lambda,x)=\int_{-1}^1 e^{\lambda xt} 
(1-t^2)^{\alpha \nu_1+(1-\alpha)\nu_2} (1+t) dt \qquad\qquad\qquad\qquad\qquad\qquad$$
$$\qquad\qquad\qquad\quad= \int_{-1}^1
\left[ e^{\lambda xt} (1-t^2)^{\nu_1-1}(1+t)\right]^\alpha \left[ e^{\lambda xt} (1-t^2)^{\nu_2-1}(1+t)\right]^{1-\alpha} dt $$
$$\qquad\qquad\qquad\qquad\quad\leq \left[\int_{-1}^1 e^{\lambda xt} (1-t^2)^{\nu_1-1}(1+t)dt\right]^\alpha  \left[\int_{-1}^1 e^{\lambda xt} (1-t^2)^{\nu_2-1}(1+t)dt\right]^{1-\alpha}
$$
$$\leq \left[\widetilde{E}_{ \nu_1}(\lambda,x)\right]^\alpha  \left[\widetilde{E}_{ \nu_2}(\lambda,x)\right]^{1-\alpha}.\qquad\qquad\qquad$$
In particular; for $\nu>0 , \quad \nu_1=\nu , \quad \nu_2=\nu+2 $ and $\alpha =\frac{1}{2}$, we get Tur\'an type inequalities for the normalized Dunkl Kernel.
\end{proof}
\section{Tur\'an Type Inequalities for $q$-Dunkl kernel}
We consider the $q$-Dunkl operator $T_{q,\nu}$ defined by :
\begin{equation}
T_{q,\nu}f(x)=D_qf(x)+\frac{[2\nu+1]_q}{x}\left[\frac{f(qx)-f(-qx)}{2}\right].
\end{equation}
We note that the $q$-Dunkl operator $T_{q,\nu}$ tends to the Dunkl operator $T_\nu$ as $q\rightarrow 1^-$.

\begin{defin} {\rm($q$-Dunkl kernel)}\\

For $q\in]0,1[$ and $\displaystyle |x|<\frac{1}{(1-q)^2}$, we define the $q$-Dunkl kernel by 
\begin{equation}
E_\nu(x;q^2)=\mathcal{I}_\nu(x;q^2)+\frac{x}{(1+q)\left[\nu+1\right]_{q^2}}\mathcal{I}_{\nu+1}(x;q^2),
\end{equation}
where $\mathcal{I}_\nu(x;q^2)$ is the normalized $q$-modified Bessel function of the first kind. 
\end{defin}
\begin{lemme}
For $q\in ]0,1[, \, |\lambda|< \frac{1}{1-q}$, the q-Dunkl kernel $E_\nu(\lambda .;q^2)$ is the unique 
analytic solution of the q-problem

\begin{equation}
T_{q,\nu}f(x)=\lambda f(x),\quad f(0)=1.
\end{equation}

\end{lemme}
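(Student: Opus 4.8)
The plan is to exhibit $f(x)=E_\nu(\lambda x;q^2)$ as a power series whose coefficients are forced by a first order recurrence, and then to argue that this same recurrence pins down every analytic solution uniquely. First I would insert the series of the normalized $q$-modified Bessel function recalled in Section 2 into the defining formula (14). Since $\mathcal{I}_\nu(\,\cdot\,;q^2)$ is even while the factor $x\,\mathcal{I}_{\nu+1}(\,\cdot\,;q^2)$ is odd, this yields an expansion $f(x)=\sum_{n\ge0}(\lambda x)^n/\beta_n(\nu)$ in which the even coefficients $\beta_{2k}(\nu)$ come from $\mathcal{I}_\nu$ and the odd coefficients $\beta_{2k+1}(\nu)$ from the second summand, each expressed through $q$-Pochhammer symbols. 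The constant term gives $f(0)=1$ at once, and the ratio test shows that the series converges under the stated hypothesis $|\lambda|<\frac{1}{1-q}$ on the relevant range of $x$, so all the termwise manipulations below are legitimate on compact subsets.

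Next I would compute the action of $T_{q,\nu}$ on monomials. Using $D_q x^n=[n]_q x^{n-1}$ together with $\frac{(qx)^n-(-qx)^n}{2}=q^n x^n$ for $n$ odd and $0$ for $n$ even, one obtains $T_{q,\nu}x^{2k}=[2k]_q\,x^{2k-1}$ and $T_{q,\nu}x^{2k+1}=\bigl([2k+1]_q+q^{2k+1}[2\nu+1]_q\bigr)x^{2k}$. The algebraic heart of the argument is the collapse
\[
\frac{1-q^{2k+2\nu+2}}{1-q}=[2k+1]_q+q^{2k+1}[2\nu+1]_q=[2k+2\nu+2]_q,
\]
which turns the awkward odd multiplier into a single $q$-integer and will later make the recurrence transparent.

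Applying $T_{q,\nu}$ term by term and matching the coefficient of $x^m$ on both sides of $T_{q,\nu}f=\lambda f$, the equation becomes equivalent to the recurrence $\beta_{n+1}(\nu)=\gamma_{n+1}\,\beta_n(\nu)$, where $\gamma_{2k}=[2k]_q$ and $\gamma_{2k+1}=[2k+2\nu+2]_q$. I expect the main obstacle to be exactly this verification: carefully separating the even and odd blocks and reducing each ratio $\beta_{n+1}/\beta_n$ to $\gamma_{n+1}$ via elementary $q$-Pochhammer identities such as $(q^2;q^2)_{k+1}=(1-q^{2k+2})(q^2;q^2)_k$. Once the recurrence is confirmed, $f(x)=E_\nu(\lambda x;q^2)$ solves (15).

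For uniqueness, I would take any analytic $f(x)=\sum_{n\ge0}a_n x^n$ with $f(0)=1$ solving (15) and run the identical termwise computation; matching coefficients then yields $a_{n+1}\gamma_{n+1}=\lambda a_n$ for every $n\ge0$. Since $q\in\,]0,1[\,$ and $\nu>0$, each multiplier $\gamma_{n+1}$, whether of the form $[2k]_q$ or $[2k+2\nu+2]_q$, is strictly positive and hence nonzero, so the normalization $a_0=1$ determines every $a_n$ uniquely. Therefore $f$ must coincide with $E_\nu(\lambda\,\cdot\,;q^2)$, which establishes the claimed uniqueness.
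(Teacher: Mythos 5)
The paper states this lemma without proof (it is in effect quoted from the literature on the $q$-Dunkl kernel, cf.\ the reference to Soltani), so there is no argument of the authors' to compare against; judged on its own, your proof is correct and is the natural one. The monomial computation $T_{q,\nu}x^{2k}=[2k]_q x^{2k-1}$ and $T_{q,\nu}x^{2k+1}=\bigl([2k+1]_q+q^{2k+1}[2\nu+1]_q\bigr)x^{2k}=[2\nu+2k+2]_q\,x^{2k}$ is right, the resulting recurrence $a_{n+1}\gamma_{n+1}=\lambda a_n$ with $\gamma_{2k}=[2k]_q$ and $\gamma_{2k+1}=[2\nu+2k+2]_q$ has all multipliers strictly positive for $q\in\,]0,1[$ and $\nu>0$, and this settles existence and uniqueness in one stroke. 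Two remarks. First, carrying out the verification you defer is worthwhile: one finds $b_{2k+1}/b_{2k}=[2\nu+2k+2]_q=(1+q)[\nu+k+1]_{q^2}$ and $b_{2k+2}/b_{2k+1}=[2k+2]_q=(1+q)[k+1]_{q^2}$, which forces $b_{2k}(\nu;q^2)=(1+q)^{2k}\Gamma_{q^2}(k+1)\Gamma_{q^2}(\nu+k+1)/\Gamma_{q^2}(\nu+1)$; the factor $(1+q)^{k}$ in the paper's displayed formula for $b_{2k}$ is therefore a typo for $(1+q)^{2k}$, and your derivation produces the correct value. Second, your appeal to ``the relevant range of $x$'' for convergence is the only loose end: what is actually needed is that $\lambda x$ lie in the disc of convergence of the series, a domain issue on which the paper itself is not careful; it does not affect the formal recurrence argument.
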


\begin{lemme}
For $\displaystyle |x|<\frac{1}{(1-q)^2}$, The $q$-Dunkl kernel $E_\nu(x;q^2)$ admits the series expansions 
\begin{equation}
E_\nu(x;q^2)=\sum_{k=0}^\infty \frac{x^{k}}{b_k(\nu;q^2)},
\end{equation}
where 
\begin{equation}
b_{2k}(\nu;q^2)=\frac{(1+q)^{k}\Gamma_{q^2}(k+1)\Gamma_{q^2}(\nu+k+1)}{\Gamma_{q^2}(\nu+1)}
\end{equation}

\begin{equation}
b_{2k+1}(\nu;q^2)=\frac{(1+q)^{2k+1}\Gamma_{q^2}(k+1)\Gamma_{q^2}(\nu+k+2)}{\Gamma_{q^2}(\nu+1)}
\end{equation}

\end{lemme}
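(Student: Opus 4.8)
The plan is to substitute the series of the normalized $q$-modified Bessel function directly into the defining relation (14) and then read off the coefficient of each power of $x$. First I would expand $\mathcal{I}_\nu(x;q^2)$ on its own: inserting the series for $I_\nu^{(1)}$ into the normalization $\mathcal{I}_\nu(z;q^2)=(1+q)^\nu\Gamma_{q^2}(\nu+1)z^{-\nu}I_\nu^{(1)}(2(1-q)z;q^2)$, the prefactors $(1+q)^\nu$, $\Gamma_{q^2}(\nu+1)$ and $z^{-\nu}$ cancel the $\nu$-dependent scaling sitting inside the Bessel series, leaving a genuine power series in even powers of $x$ only, of the shape $\mathcal{I}_\nu(x;q^2)=\sum_{k\ge 0}A_k(\nu)\,x^{2k}$, where $A_k(\nu)$ is built from $(q^2;q^2)_k$, a shifted $q$-Pochhammer symbol in base $q^2$, and a power of $(1\pm q)$. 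The convergence condition $|x|<1/(1-q)^2$ guarantees that all these manipulations may be carried out term by term.

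Once $\mathcal{I}_\nu(x;q^2)$ is written in this purely even form, the even/odd split of (14) is immediate: the summand $\mathcal{I}_\nu(x;q^2)$ supplies exactly the even powers $x^{2k}$, while $\frac{x}{(1+q)[\nu+1]_{q^2}}\mathcal{I}_{\nu+1}(x;q^2)$ supplies exactly the odd powers $x^{2k+1}$. Hence the coefficient of $x^{2k}$ in $E_\nu(x;q^2)$ is $A_k(\nu)$ and the coefficient of $x^{2k+1}$ is $\frac{1}{(1+q)[\nu+1]_{q^2}}A_k(\nu+1)$. Matching against the claimed expansion $E_\nu(x;q^2)=\sum_k x^k/b_k(\nu;q^2)$ then reduces to proving the two identities $A_k(\nu)=1/b_{2k}(\nu;q^2)$ and $\frac{A_k(\nu+1)}{(1+q)[\nu+1]_{q^2}}=1/b_{2k+1}(\nu;q^2)$.

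The computational heart is converting the $q$-Pochhammer symbols inside $A_k(\nu)$ into $q$-gamma functions. I would use the two consequences of $\Gamma_{q^2}(x)=(q^2;q^2)_\infty(q^{2x};q^2)_\infty^{-1}(1-q^2)^{1-x}$, namely $(q^2;q^2)_k=(1-q^2)^k\Gamma_{q^2}(k+1)$ and $\frac{\Gamma_{q^2}(\nu+k+1)}{\Gamma_{q^2}(\nu+1)}=(1-q^2)^{-k}(q^{2\nu+2};q^2)_k$, to rewrite $A_k(\nu)$ in closed form; collecting the surviving powers of $(1-q)$, $(1+q)$ and $(1-q^2)$ yields $b_{2k}(\nu;q^2)$. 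For the odd coefficients I would combine $A_k(\nu+1)$ with the prefactor through the functional equation in the form $[\nu+1]_{q^2}=\Gamma_{q^2}(\nu+2)/\Gamma_{q^2}(\nu+1)$, which gives $b_{2k+1}(\nu;q^2)=(1+q)[\nu+1]_{q^2}\,b_{2k}(\nu+1;q^2)$ — the exact $q$-analogue of the classical recursion $b_{2n+1}(\nu)=2(\nu+1)b_{2n}(\nu+1)$ recorded in (5), so that $b_{2k+1}$ follows from $b_{2k}$ with essentially no extra work.

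The main obstacle is pure bookkeeping: keeping the powers of $(1-q)$, $(1+q)$ and $(1-q^2)=(1-q)(1+q)$ straight through the argument rescaling, i.e. the $2(1-q)z$ in the normalization versus the $(1-q^2)z$ appearing in the definition of $I_\nu^{(1)}$, and correctly translating the shifted base of the $q$-Pochhammer symbol into the integer shift of the argument of $\Gamma_{q^2}$. These exponents must be tracked carefully, because it is precisely here that the power of $(1+q)$ in $b_{2k}$ and $b_{2k+1}$ and the integer offsets inside the $\Gamma_{q^2}$ arguments are pinned down; everything else is routine identification of power-series coefficients.
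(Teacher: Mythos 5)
The paper states this lemma without proof, so there is no in-paper argument to compare against. Your route --- expand $\mathcal{I}_\nu(x;q^2)$ and $\mathcal{I}_{\nu+1}(x;q^2)$ as power series, note that the first summand of (14) carries exactly the even powers and the second exactly the odd ones, and convert the $q$-shifted factorials into $q$-gamma functions --- is the natural (essentially the only) way to prove it, and your reduction of the odd coefficients to the even ones via $[\nu+1]_{q^2}=\Gamma_{q^2}(\nu+2)/\Gamma_{q^2}(\nu+1)$, i.e. the recursion $b_{2k+1}(\nu;q^2)=(1+q)[\nu+1]_{q^2}\,b_{2k}(\nu+1;q^2)$, is correct and does match (17).

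There is, however, a gap exactly where you defer to ``bookkeeping'': the proposal never writes down $A_k(\nu)$, and when one does, the result is not the formula (16) as printed. Carrying out the substitution $z\mapsto 2(1-q)z$ in the series for $I_\nu^{(1)}$ gives $\mathcal{I}_\nu(z;q^2)=\sum_{k\ge 0}\frac{(1-q)^{2k}z^{2k}}{(q^2;q^2)_k\,(q^{2\nu+2};q^2)_k}$, and using $(q^2;q^2)_k=(1-q^2)^k\Gamma_{q^2}(k+1)$ and $(q^{2\nu+2};q^2)_k=(1-q^2)^k\Gamma_{q^2}(\nu+k+1)/\Gamma_{q^2}(\nu+1)$ one finds $A_k(\nu)=\frac{\Gamma_{q^2}(\nu+1)}{(1+q)^{2k}\Gamma_{q^2}(k+1)\Gamma_{q^2}(\nu+k+1)}$; that is, $b_{2k}(\nu;q^2)$ carries $(1+q)^{2k}$, not the $(1+q)^{k}$ of (16). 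The exponent $2k$ is the correct one: it is forced by consistency with (17), by the limit $b_{2k}(\nu;q^2)\to 2^{2k}k!\,\Gamma(\nu+k+1)/\Gamma(\nu+1)=b_{2k}(\nu)$ as $q\to 1^-$, and by your own recursion above. So either (16) contains a typo or your assertion that the computation ``yields $b_{2k}(\nu;q^2)$'' is false; a complete proof must say which. Note also that you tacitly replace the $(q^{2\nu+1};q^2)_k$ appearing in the paper's definition of $I_\nu^{(1)}$ by $(q^{2\nu+2};q^2)_k$; without that correction the quotient $\Gamma_{q^2}(\nu+k+1)/\Gamma_{q^2}(\nu+1)$ in (16) cannot arise, so this silent emendation should be made explicit as well.
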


\begin{theorem}
For $q\in ]0,1[$ and $x, \, \lambda \in [0,\frac{1}{1-q}[$, the function 
$\nu \longmapsto E_\nu(\lambda x;q^2)$ is log-convex on $]0,\infty[$,\\
i.e
\begin{equation}
E_{\alpha\nu_1+(1-\alpha)\nu_2}(\lambda x;q^2)\leq 
\left[E_{\nu_1}(\lambda x;q^2)\right]^{\alpha}\left[E_{\nu_2}(\lambda x;q^2)\right]^{1-\alpha},\quad 
\forall \nu_1,\nu_2>0,\quad \forall \alpha \in[0,1].
\end{equation}
In particular; the Tur\'an type inequalities for The $q$-Dunkl kernel holds :
\begin{equation}
\left[E_{\nu+1}(\lambda x;q^2)\right]^2\leq E_{\nu}(\lambda x;q^2)E_{\nu+2}(\lambda x;q^2), \quad \forall\nu>0.
\end{equation}
\end{theorem}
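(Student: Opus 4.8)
The plan is to follow verbatim the strategy of the proof of Theorem 1: I would reduce the log-convexity of $\nu\mapsto E_\nu(\lambda x;q^2)$ to the log-convexity of each individual summand in its power-series expansion, and then use that a pointwise sum of log-convex functions is again log-convex. Since $\lambda,x\in[0,\frac{1}{1-q}[$ we have $\lambda x<\frac{1}{(1-q)^2}$, so the series expansion (15) applies and reads
$$E_\nu(\lambda x;q^2)=\sum_{k=0}^\infty\frac{(\lambda x)^k}{b_k(\nu;q^2)}.$$
For $\lambda x>0$ every term is strictly positive, because each $b_k(\nu;q^2)$ is a product of values of $\Gamma_{q^2}$ at positive arguments; the degenerate case $\lambda x=0$ reduces to the constant $E_\nu(0;q^2)=1$ and is trivial. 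Hence it suffices to prove that for each fixed $k$ the map $\nu\mapsto (\lambda x)^k/b_k(\nu;q^2)$ is log-convex on $]0,\infty[$.

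The first ingredient I would establish is the $q$-analogue of the fact, used freely in Theorem 1, that the digamma function is concave: namely that the $q$-digamma $\psi_{q^2}$ is concave on $]0,\infty[$. Starting from the representation
$$\psi_{q}(x)=-Log(1-q)+Log(q)\sum_{n=1}^\infty\frac{q^{nx}}{1-q^n}$$
recorded in Section 2, two term-by-term differentiations give
$$\psi_q''(x)=(Log\,q)^3\sum_{n=1}^\infty\frac{n^2\,q^{nx}}{1-q^n}.$$
For $q\in]0,1[$ one has $Log\,q<0$, so $(Log\,q)^3<0$ while the series is positive; therefore $\psi_q''<0$, and replacing $q$ by $q^2\in]0,1[$ shows that $\psi_{q^2}$ is concave, equivalently that $\psi_{q^2}'$ is decreasing on $]0,\infty[$.

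The second ingredient is the term-by-term second-derivative computation. Using the coefficients (16)--(17) together with $\frac{d}{d\nu}Log\,\Gamma_{q^2}(\nu+a)=\psi_{q^2}(\nu+a)$, the $\nu$-independent factors $(1+q)^k$ and $\Gamma_{q^2}(k+1)$ drop out after one differentiation, and I obtain
$$\frac{d^2}{d\nu^2}\Big[Log\,\frac{1}{b_{2k}(\nu;q^2)}\Big]=\psi_{q^2}'(\nu+1)-\psi_{q^2}'(\nu+k+1)$$
for the even-indexed terms and
$$\frac{d^2}{d\nu^2}\Big[Log\,\frac{1}{b_{2k+1}(\nu;q^2)}\Big]=\psi_{q^2}'(\nu+1)-\psi_{q^2}'(\nu+k+2)$$
for the odd-indexed terms. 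Note that, in contrast with the classical case treated in Theorem 1, no additional summand $1/(\nu+1)^2$ arises here, since the $q$-coefficients (16)--(17) contain no separate linear factor in $\nu$. Because $\psi_{q^2}'$ is decreasing and $\nu+1\leq\nu+k+1\leq\nu+k+2$, both right-hand sides are nonnegative, so every summand is log-convex.

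Summing over $k$ preserves log-convexity and yields (18); specializing to $\nu_1=\nu$, $\nu_2=\nu+2$, $\alpha=\frac{1}{2}$ then gives the Tur\'an-type inequality (19). The only genuinely new input beyond the template of Theorem 1 is the concavity of $\psi_{q^2}$, and thanks to its explicit series representation from Section 2 I expect this to be the easy part rather than the main obstacle; the one place that needs care is justifying the term-by-term differentiation and the transfer of log-convexity from the summands to the infinite sum on the stated range $\lambda x<\frac{1}{(1-q)^2}$.
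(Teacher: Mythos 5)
Your proposal is correct and follows essentially the same route as the paper's own proof: term-by-term log-convexity of the series expansion via the sign of $\psi'_{q^2}(\nu+1)-\psi'_{q^2}(\nu+k+1)$ (resp. $\psi'_{q^2}(\nu+1)-\psi'_{q^2}(\nu+k+2)$), using that $\psi'_{q^2}$ is decreasing, then summing and specializing $\nu_1=\nu$, $\nu_2=\nu+2$, $\alpha=\tfrac12$. The only additions are your explicit verification that $\psi_{q^2}$ is concave from its series representation and the remark that no $1/(\nu+1)^2$ term appears here, both of which are accurate but do not change the argument.
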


\begin{proof}
As in the classical case, we establish the Log-convexity of the function \\
$\nu \longmapsto E_{\nu}(\lambda x;q^2)$ by proving the Log-convexity of each term of its series expansion, and then we use the fact that the sums of Log-convex functions are Log-convex too.
Let $k\geq 0$, Since $\psi^{'}_q$ is decreasing on $]0,\infty[$, we get:
$$\frac{d^2}{d\nu^2}\left[Log\left(\frac{1}{b_{2k}(\nu;q^2)}\right)\right]=\psi^{'}_{q^2}(\nu+1)- \psi^{'}_{q^2}(\nu+k+1)\geq 0$$
and 
$$\frac{d^2}{d\nu^2}\left[Log\left(\frac{1}{b_{2k+1}(\nu;q^2)}\right)\right]=\psi^{'}_{q^2}(\nu+1)- \psi^{'}_{q^2}(\nu+k+2)\geq 0$$
Consequently, the function $\nu \longmapsto E_{\nu}(\lambda x;q^2)$ is Log-convex on $]0,\infty[$ : 
$$E_{t\nu_1+(1-\alpha)\nu_2}(\lambda x;q^2)\leq 
\left[E_{\nu_1}(\lambda x;q^2)\right]^\alpha\left[E_{\nu_2}(\lambda x;q^2)\right]^{1-\alpha},\quad
\forall \nu_1,\nu_2>0,\quad \forall \alpha\in[0,1].$$
In particular; for $\alpha=\frac{1}{2} \quad \nu_1=\nu, \quad \nu_2=\nu+2,$ Tur\'an types inequality for $q$-Dunkl Kernel holds.
\end{proof}
\begin{lemme} 
For all $q\in ]0,1[$, $\displaystyle |x|<\frac{1}{(1-q)^2}$ , the $q$-Dunkl kernel admits the q-integral representation : 
\begin{equation}
E_{\nu}(x;q^2)=C(\nu,q^2)\int_{-1}^1 W_\nu(t;q^2)(1+t) E(q, (1-q)tx) d_qt,
\end{equation}
where 
\begin{equation}
C(\nu,q^2)=\frac{(1+q)\Gamma_{q^2}(\nu+1)}{2\Gamma_{q^2}(\frac{1}{2})\Gamma_{q^2}(\nu+\frac{1}{2})}
\end{equation}
and 
\begin{equation}
W_\nu(x;q^2)=\frac{(x^2q^2;q^2)_\infty}{(x^2q^{2\nu+1};q^2)_\infty}.
\end{equation}
\end{lemme}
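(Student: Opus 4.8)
The plan is to prove the representation by expanding both sides as power series in $x$ and comparing coefficients against the series expansion $E_\nu(x;q^2)=\sum_{k\ge0}x^k/b_k(\nu;q^2)$ furnished by the preceding Lemma, with $b_{2k},b_{2k+1}$ as in (18),(19). Concretely, I would insert the definition $E(q,(1-q)tx)=\sum_{n\ge0}q^{n(n-1)/2}\frac{(1-q)^n(tx)^n}{(q;q)_n}$ into the right-hand side, interchange the summation with the Jackson $q$-integral (legitimate for $|x|<1/(1-q)^2$ by absolute convergence of the exponential series together with the boundedness of $W_\nu$ on the lattice $\{\pm q^m\}$), and thereby reduce the claim to the single family of scalar identities
\[
\frac{1}{b_k(\nu;q^2)}=C(\nu,q^2)\,q^{k(k-1)/2}\frac{(1-q)^k}{(q;q)_k}\int_{-1}^1 W_\nu(t;q^2)(1+t)\,t^k\,d_qt,\qquad k\ge0.
\]

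The second step is to evaluate the moment $q$-integrals. Since $W_\nu(\cdot;q^2)$ is an even function of $t$, I would write $(1+t)t^k=t^k+t^{k+1}$ and use that the symmetric Jackson integral annihilates odd functions, because $\int_{-1}^1 g\,d_qt=(1-q)\sum_{m\ge0}[g(q^m)+g(-q^m)]q^m$. Hence the moment collapses to twice an even moment: for $k=2j$ it equals $2\int_0^1 W_\nu(t;q^2)t^{2j}\,d_qt$, and for $k=2j+1$ it equals $2\int_0^1 W_\nu(t;q^2)t^{2j+2}\,d_qt$. Everything thus reduces to the single family $\mu_{2m}:=\int_0^1 W_\nu(t;q^2)t^{2m}\,d_qt$.

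The heart of the proof is the closed evaluation of $\mu_{2m}$. I would write $\mu_{2m}=(1-q)\sum_{n\ge0}W_\nu(q^n;q^2)q^{(2m+1)n}$ with $W_\nu(q^n;q^2)=(q^{2n+2};q^2)_\infty/(q^{2n+2\nu+1};q^2)_\infty$, and peel off the tails via $(q^{2n+2};q^2)_\infty=(q^2;q^2)_\infty/(q^2;q^2)_n$ and $(q^{2n+2\nu+1};q^2)_\infty=(q^{2\nu+1};q^2)_\infty/(q^{2\nu+1};q^2)_n$. This turns the sum into a ${}_1\phi_0$ series $\sum_{n\ge0}\frac{(q^{2\nu+1};q^2)_n}{(q^2;q^2)_n}(q^{2m+1})^n$, which the $q$-binomial theorem sums to $(q^{2\nu+2m+2};q^2)_\infty/(q^{2m+1};q^2)_\infty$ (this is exactly a $q$-Beta evaluation). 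Rewriting the four infinite products through the identity $\Gamma_{q^2}(a)=(q^2;q^2)_\infty(1-q^2)^{1-a}/(q^{2a};q^2)_\infty$ should yield the compact formula $\mu_{2m}=\frac{1}{1+q}\frac{\Gamma_{q^2}(\nu+\frac{1}{2})\Gamma_{q^2}(m+\frac{1}{2})}{\Gamma_{q^2}(\nu+m+1)}$.

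Finally I would substitute these moment values back and verify the displayed coefficient identity, splitting into the even and odd cases and using the factorizations $(q;q)_{2j}=(q;q^2)_j(q^2;q^2)_j$ and $(q;q)_{2j+1}=(q;q^2)_{j+1}(q^2;q^2)_j$, together with $\Gamma_{q^2}(j+1)=(q^2;q^2)_j(1-q^2)^{-j}$ and $\Gamma_{q^2}(j+\frac{1}{2})/\Gamma_{q^2}(\frac{1}{2})=(q;q^2)_j(1-q^2)^{-j}$, to collapse all $q$-Gamma factors into $1/b_{2j}(\nu;q^2)$ and $1/b_{2j+1}(\nu;q^2)$. I expect the main obstacle to be precisely this last bookkeeping: tracking the powers of $(1-q^2)$, the $(1+q)=[2]_q$ factors coming from $C(\nu,q^2)$ and from the even/odd split, and the half-integer $q^2$-shifts, so that $C(\nu,q^2)$ emerges with exactly the stated normalization (22). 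A secondary technical point is the rigorous justification of the term-by-term $q$-integration, which I would handle through the domain restriction $|x|<1/(1-q)^2$ guaranteeing absolute convergence on the $q$-lattice.
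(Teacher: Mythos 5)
Your route is genuinely different from the paper's. The paper does not verify the representation by series at all: it quotes from Olshanetsky--Rogov the $q$-integral representation $\mathcal{I}_{\nu}(x;q^2)= C(\nu,q^2)\int_{-1}^1 W_\nu(t;q^2)\, E(q, (1-q)tx)\, d_qt$ of the normalized $q$-modified Bessel function, combines it with the relation $E_{\nu}(x;q^2)=\mathcal{I}_{\nu}(x;q^2)+D_q\mathcal{I}_{\nu}(x;q^2)$, and uses the eigenfunction property $D_q E(q,(1-q)tx)=tE(q,(1-q)tx)$ to pull the extra factor $t$ out of the integral, producing the weight $(1+t)$ in two lines. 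Your plan is self-contained (it would not rely on the cited representation), and the ingredients you list are all correct as far as they go: the parity argument for the symmetric Jackson integral, the peeling of the infinite products, the $q$-binomial summation of the ${}_1\phi_0$, and the resulting moment formula $\mu_{2m}=\frac{1}{1+q}\frac{\Gamma_{q^2}(\nu+\frac12)\Gamma_{q^2}(m+\frac12)}{\Gamma_{q^2}(\nu+m+1)}$ all check out.

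The gap is exactly at the step you defer as ``bookkeeping'': the concluding coefficient identity does not hold with the normalizations as stated, so the verification cannot be completed as announced. Already at $k=2$ your reduction gives
\[
C(\nu,q^2)\,q\,\frac{(1-q)^2}{(q;q)_2}\cdot 2\mu_2=\frac{q}{1+q}\cdot\frac{1-q}{1-q^{2\nu+2}},
\qquad\text{whereas}\qquad \frac{1}{b_2(\nu;q^2)}=\frac{1-q}{1-q^{2\nu+2}},
\]
an extraneous factor $q/(1+q)$; in general the factor $q^{k(k-1)/2}$ contributed by $E(q;\cdot)$ and a mismatch between $(1+q)^{k}$ and $(1+q)^{2k}$ survive. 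You cannot make these ``collapse into $1/b_k(\nu;q^2)$'' by rearranging $q$-Gamma factors. This is not merely your miscalculation: the paper's own normalizations are mutually inconsistent (the series coefficients of Lemma~4 do not match Definition~2, and the choice of $q$-exponential in the cited integral representation is not pinned down), which is precisely why a coefficient-matching proof is the more honest but also the more hazardous route here. To salvage your argument you would have to first fix a consistent set of normalizations (of $b_k$, of $C(\nu,q^2)$, and of which $q$-exponential appears in the kernel) and only then run the moment computation; as written, the proof would terminate in a contradiction rather than in the claimed identity.
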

\begin{proof}
Let $q\in ]0,1[,\quad \displaystyle |x|<\frac{1}{(1-q)^2}$, as in \cite{OR1}, the  normalized $q$-modified Bessel function of first the kind admits the following integral representation :
\begin{equation}
I_{\nu}(x;q^2)= C(\nu,q^2)\int_{-1}^1 W_\nu(t;q^2) E(q, (1-q)tx) d_qt,  
\end{equation}
where $C(\nu,q^2) ; W_\nu(t;q^2)$ are given respectively by (21) and (22).\\
Knowing that, 
\begin{equation}
D_q(E((1-q)x;q))=E((1-q)x;q)
\end{equation}
and since
\begin{equation}
E_{\nu}(x;q^2)=I_{\nu}(x;q^2)+D_q(I_{\nu}(x;q^2)).
\end{equation}
Using (23), (24) and (25), we get :
$$
E_{\nu}(x;q^2)=I_{\nu}(x;q^2)+D_q(I_{\nu}(x;q^2))$$
$$=C(\nu,q^2)\int_{-1}^1 W_\nu(t;q^2) E(q, (1-q)tx) d_qt
+C(\nu,q^2)\int_{-1}^1 W_\nu(t;q^2) D_q[E(q, (1-q)tx)] d_qt$$
$$=C(\nu,q^2)\int_{-1}^1 W_\nu(t;q^2) E(q, (1-q)tx) d_qt+
C(\nu,q^2)\int_{-1}^1 W_\nu(t;q^2)t E(q, (1-q)tx)d_qt.
$$

\end{proof}

\begin{defin}
For all $q\in ]0,1[$ and $\displaystyle |x|< \frac{1}{(1-q)^2}$, the normalized $q$-Dunkl kernel 
is defined by 
 \begin{equation}
 \widetilde{E_\nu}(x;q^2)=\frac{E_\nu(x;q^2)}{C(\nu,q^2)},
 \end{equation}
 where $C(\nu,q^2)$ is given by (21).
\end{defin}
\begin{theorem}
For $q\in ]0,1[$, $\displaystyle |x|< \frac{1}{(1-q)^2}$ , the the normalized $q$-Dunkl kernel 
$ \tilde{E}_\nu(x;q^2)$ satisfy a Tur\'an type inequality ,\\
i.e

\begin{equation}
\displaystyle \left[\widetilde{E}_{\nu+1}(x;q^2)\right]^2\leq \widetilde{E}_{\nu}(x;q^2)\widetilde{E}_{\nu+2}(x;q^2), \quad \forall\nu>0.
\end{equation}
\end{theorem}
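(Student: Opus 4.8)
The plan is to mirror the proof of Theorem 2 (the classical normalized Dunkl kernel) but replace the ordinary integral and Hölder's inequality with their $q$-analogues. Recall that by the previous lemma, for $q\in\,]0,1[$ and $|x|<\frac{1}{(1-q)^2}$ the normalized $q$-Dunkl kernel admits the $q$-integral representation
\begin{equation*}
\widetilde{E_\nu}(x;q^2)=\frac{E_\nu(x;q^2)}{C(\nu,q^2)}=\int_{-1}^1 W_\nu(t;q^2)(1+t)\,E(q,(1-q)tx)\,d_qt,
\end{equation*}
where $W_\nu(t;q^2)=\dfrac{(t^2q^2;q^2)_\infty}{(t^2q^{2\nu+1};q^2)_\infty}$. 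The central structural observation I would isolate first is that the weight $W_\nu$ behaves multiplicatively in the exponent $\nu$ under the midpoint split: writing $W_\nu^{1/2}(t;q^2)\,W_{\nu+2}^{1/2}(t;q^2)$ and comparing with $W_{\nu+1}(t;q^2)$, one checks that
\begin{equation*}
W_{\nu+1}(t;q^2)=\bigl[W_\nu(t;q^2)\bigr]^{1/2}\bigl[W_{\nu+2}(t;q^2)\bigr]^{1/2}
\end{equation*}
because the infinite $q^2$-products telescope in the shift $\nu\mapsto\nu+2$ at the midpoint $\nu+1$. This identity is precisely the $\alpha=\frac12$ log-convexity of $W_\nu$ in $\nu$, and it is the algebraic heart of the whole argument.

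With that identity in hand, the plan is to factor the integrand of $\widetilde{E}_{\nu+1}$ symmetrically. Since $E(q,(1-q)tx)\geq 0$ and $(1+t)\geq 0$ on the $q$-lattice points of $]-1,1[$, I would write
\begin{equation*}
W_{\nu+1}(t;q^2)(1+t)E(q,(1-q)tx)
=\Bigl[W_\nu(t)(1+t)E\Bigr]^{1/2}\Bigl[W_{\nu+2}(t)(1+t)E\Bigr]^{1/2},
\end{equation*}
where $E$ abbreviates $E(q,(1-q)tx)$ and I have suppressed the $q^2$ in $W$. This exhibits the $\widetilde{E}_{\nu+1}$ integrand as a product of the square roots of the $\widetilde{E}_{\nu}$ and $\widetilde{E}_{\nu+2}$ integrands, exactly the form to which a Cauchy--Schwarz type estimate applies.

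Next I would invoke the $q$-analogue of the Schwartz (Cauchy--Schwarz) inequality for the $q$-Jackson integral, which the abstract advertises as a tool: for nonnegative $q$-integrable $f,g$,
\begin{equation*}
\int_{-1}^1 f(t)g(t)\,d_qt\leq\Bigl[\int_{-1}^1 f^2(t)\,d_qt\Bigr]^{1/2}\Bigl[\int_{-1}^1 g^2(t)\,d_qt\Bigr]^{1/2}.
\end{equation*}
Applying this with $f(t)=\bigl[W_\nu(t)(1+t)E\bigr]^{1/2}$ and $g(t)=\bigl[W_{\nu+2}(t)(1+t)E\bigr]^{1/2}$ turns the factored integral directly into
\begin{equation*}
\widetilde{E}_{\nu+1}(x;q^2)\leq\bigl[\widetilde{E}_\nu(x;q^2)\bigr]^{1/2}\bigl[\widetilde{E}_{\nu+2}(x;q^2)\bigr]^{1/2},
\end{equation*}
and squaring both sides yields the claimed Tur\'an inequality (29).

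I expect the main obstacle to be twofold and technical rather than conceptual. First, one must justify that the $q$-Cauchy--Schwarz inequality is genuinely valid for the $q$-Jackson integral over $[-1,1]$ as defined here (the Jackson integral is an infinite sum over the geometric lattice $\{\pm q^n\}$, so the inequality reduces to the ordinary discrete Cauchy--Schwarz for the measure with weights $q^n$, but the sign structure on $[-1,0]$ and the absolute convergence must be handled with care). Second, the positivity of the integrand on the full $q$-lattice needs checking: $W_\nu(t;q^2)>0$ and $E(q,(1-q)tx)>0$ hold on the relevant range $|x|<\frac{1}{(1-q)^2}$, but the factor $(1+t)$ is only nonnegative, and the weight identity for $W_\nu$ must be verified to hold pointwise on every lattice point, not merely formally. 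Once these positivity and convergence points are secured, the argument is a routine transcription of the classical Hölder proof with $q$-Cauchy--Schwarz in place of Hölder at $\alpha=\frac12$.
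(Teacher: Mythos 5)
Your overall strategy is exactly the paper's: use the $q$-integral representation of $\widetilde{E}_\nu$, compare the weight $W_{\nu+1}$ with $W_\nu^{1/2}W_{\nu+2}^{1/2}$ pointwise, and finish with the $q$-Cauchy--Schwarz inequality. However, the step you single out as ``the algebraic heart of the whole argument'' is wrong as stated: the products do \emph{not} telescope to an identity. Writing $(t^2q^{2\nu+1};q^2)_\infty=(1-t^2q^{2\nu+1})(t^2q^{2\nu+3};q^2)_\infty$ and likewise one step further, a direct computation gives
\begin{equation*}
\frac{\bigl[W_{\nu+1}(t;q^2)\bigr]^2}{W_\nu(t;q^2)\,W_{\nu+2}(t;q^2)}
=\frac{(t^2q^{2\nu+1};q^2)_\infty\,(t^2q^{2\nu+5};q^2)_\infty}{\bigl[(t^2q^{2\nu+3};q^2)_\infty\bigr]^2}
=\frac{1-t^2q^{2\nu+1}}{1-t^2q^{2\nu+3}},
\end{equation*}
which equals $1$ only at $t=0$. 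Since $0<q<1$ and $t^2q^{2\nu+1}<1$ on $[-1,1]$, this ratio is strictly less than $1$ for $t\neq 0$, so the correct statement is the \emph{inequality} $W_{\nu+1}(t;q^2)\leq \bigl[W_\nu(t;q^2)\bigr]^{1/2}\bigl[W_{\nu+2}(t;q^2)\bigr]^{1/2}$, not an equality. This is precisely what the paper establishes (its inequality (28)) from the relation $(q^{2\nu+1}x^2;q^2)_\infty=(1-x^2q^{2\nu+1})(q^{2\nu+3}x^2;q^2)_\infty$.

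The error is benign for the final result because the inequality goes in the direction you need: replacing your claimed factorization of the integrand by the pointwise bound
\begin{equation*}
W_{\nu+1}(t;q^2)(1+t)E(q,(1-q)tx)\leq \bigl[W_\nu(t;q^2)(1+t)E(q,(1-q)tx)\bigr]^{1/2}\bigl[W_{\nu+2}(t;q^2)(1+t)E(q,(1-q)tx)\bigr]^{1/2},
\end{equation*}
valid since $(1+t)E(q,(1-q)tx)\geq 0$ on the lattice, the rest of your argument (the $q$-Cauchy--Schwarz step and the squaring) goes through verbatim and coincides with the paper's proof. So you should correct the ``identity'' to an inequality and supply the one-line verification above; your remaining technical caveats about positivity and convergence on the $q$-lattice are reasonable and are indeed points the paper passes over in silence.
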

\begin{proof} 

Let $q\in ]0,1[$, $\nu >0$ and $\displaystyle |x|< \frac{1}{(1-q)^2}$. \\
Using the relation :
$$\displaystyle (q^{2\nu+1}x^2;q^2)_\infty=(1-x^2q^{2\nu+1})(q^{2\nu+3}x^2;q^2)_\infty,$$ 
we get : 
\begin{equation}
W_{\nu+1}(x,q^2)<\left[W_\nu(x,q^2)\right]^{\frac{1}{2}}\left[W_{\nu+2}(x,q^2)\right]^{\frac{1}{2}},
\end{equation}
where $W_\nu(x,q^2)$ is given by (22).\\
Using (28), the $q$-version of Schwatz inequality and the $q$-integral representation of the $q$-Dunkl Kernel, we obtain:\\
for $\nu >0,\, q\in ]0,1[$ and $\displaystyle |x|<\frac{1}{(1-q)^2}$
$$
\widetilde{E}_{\nu+1}(x;q^2)=\int_{-1}^1 W_{\nu+1}(t;q^2) (1+t)E_q((1-q)tx) d_qt$$
$$\leq \int_{-1}^1 [ W_{\nu}(t;q^2) (1+t)E_q((1-q)tx)]^{\frac{1}{2}}\times [ W_{\nu+2}(t;q^2) (1+t)E_q((1-q)tx)]^{\frac{1}{2}}d_qt$$
$$\leq \left[\int_{-1}^1 W_{\nu}(t;q^2) (1+t)E_q((1-q)tx)\right]^{\frac{1}{2}}
\times \left[\int_{-1}^1 W_{\nu+2}(t;q^2) (1+t)E_q((1-q)tx)\right]^{\frac{1}{2}}$$
$$\leq \left[\widetilde{E}_{\nu}(x;q^2)\right]^{\frac{1}{2}} \left[\widetilde{E}_{\nu+2}(x;q^2)\right]^{\frac{1}{2}}.$$
\end{proof}
\section{Applications}

\begin{theorem}
The following assertions are true :\\

(1) For  $\lambda,\, x \geq 0,$ the function $\displaystyle \nu\longmapsto \frac{E_{\nu+1}(\lambda,x)}{E_{\nu}(\lambda,x)}$ is increasing on $]0,\infty[$.\\

(2) For $q\in ]0,1[$ and $  x, \,\lambda \in [0, \frac{1}{1-q}[$, the function $\displaystyle \nu\longmapsto \frac{E_{\nu+1}(x,q^{2})}{E_{\nu}(x,q^{2})}$  is increasing on $]0,\infty[.$
\end{theorem}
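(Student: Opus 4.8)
The plan is to read off both monotonicity statements from the log-convexity already proved, via the elementary principle that \emph{a positive log-convex function $g$ on $]0,\infty[$ has increasing ratio $\nu\longmapsto g(\nu+1)/g(\nu)$}. I would first record this principle: if $g>0$ and $\log g$ is convex (and smooth, as here), then $(\log g)'$ is nondecreasing, whence
$$\frac{d}{d\nu}\log\frac{g(\nu+1)}{g(\nu)}=(\log g)'(\nu+1)-(\log g)'(\nu)\geq 0,$$
so that $g(\nu+1)/g(\nu)$ is increasing on $]0,\infty[$. Both parts then reduce to checking positivity and invoking the appropriate log-convexity theorem.

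For (1) I would take $g(\nu)=E_\nu(\lambda,x)$. For $\lambda,x\geq 0$ each coefficient $1/b_n(\nu)$ in the series (4) is positive, so $E_\nu(\lambda,x)>0$ and $\log g$ is well defined; since the coefficients are smooth in $\nu$ and the series converges locally uniformly, differentiation in $\nu$ term by term is legitimate (this is exactly the setting in which the log-convexity (7) was obtained). By the log-convexity of $\nu\longmapsto E_\nu(\lambda,x)$ the principle applies and gives that $\nu\longmapsto E_{\nu+1}(\lambda,x)/E_\nu(\lambda,x)$ is increasing.

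For (2) the argument is identical with $g(\nu)=E_\nu(x;q^2)$, which is the case $\lambda=1$ of the kernel $E_\nu(\lambda x;q^2)$ treated before (note $1\in[0,\frac{1}{1-q}[$ for $q\in]0,1[$). On the domain $x,\lambda\in[0,\frac{1}{1-q}[$ the coefficients $1/b_k(\nu;q^2)$ from (15)--(17) are positive, hence $E_\nu(x;q^2)>0$, and the log-convexity (18) makes $(\log g)'$ nondecreasing; the principle then yields that $\nu\longmapsto E_{\nu+1}(x;q^2)/E_\nu(x;q^2)$ is increasing.

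Since the whole proof is a repackaging of the convexity established earlier, I expect no genuine obstacle. The only points deserving care are the positivity of the kernels, needed to pass to logarithms, which is immediate from the positivity of the Gamma-quotient coefficients on the stated domains, and the interchange of differentiation and summation in $\nu$, which is justified by the same local uniform convergence used for the log-convexity proofs. If one prefers to avoid differentiability, the same conclusion follows by using the two-point inequalities (7) and (18) to show directly that the increment $\log g(\nu+1)-\log g(\nu)$ is nondecreasing, but the derivative computation above is the most transparent route.
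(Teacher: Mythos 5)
Your proposal is correct and follows essentially the same route as the paper: both deduce the monotonicity of $\nu\longmapsto E_{\nu+1}/E_{\nu}$ directly from the log-convexity established in the earlier theorems, via the observation that $(\log g)'(\nu+1)-(\log g)'(\nu)\geq 0$ for a log-convex $g$. You are merely more explicit than the paper about the positivity of the kernels and the justification of the derivative step, which is a welcome addition rather than a departure.
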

\begin{proof}
From Theorems 1 and 3, we deduce the Log-convexity of the functions \\
$\displaystyle  \nu\longmapsto E_{\nu}(\lambda,x)$ and $\nu \longmapsto E_{\nu}(x,q^{2})$ on $]0,\infty[$. 
Thus, the functions 
 $\displaystyle  \nu\longmapsto Log \left[\frac{E_{\nu+1}(\lambda,x)}{E_{\nu}(\lambda,x)}\right]$ and 
 $\displaystyle  \nu \longmapsto Log \left[\frac{E_{\nu+1}(x,q^{2})}{E_{\nu}(x,q^{2})}\right] $
  are increasing on $]0,\infty[$. Which completes the proof.
\end{proof}

\noindent In the next corollary, we give some hyperbolic Jordan's type inequalities for hyperbolic  functions.
\begin{coro}
the following inequalities are valid :
 
$$\displaystyle  (1-x)e^{x}\leq \frac{\sinh x}{x} \quad ; x>0 $$
$$\displaystyle   (1+x)e^{-x}\leq \frac{\sinh x}{x} \quad ; x<0.$$
\end{coro}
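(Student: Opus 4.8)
The plan is to recognize the two expressions on each side as values of the Dunkl kernel $E_\nu(1,\cdot)$ at $\nu=0$ and $\nu=1$, and then to read off the inequality from the monotonicity of $\nu\mapsto E_\nu$ provided by Theorem 5. First I would evaluate the kernel at these two indices using the Bessel representation (6)/(2). Since the normalized Bessel functions satisfy $j_{-1/2}(z)=\cos z$ and $j_{1/2}(z)=\frac{\sin z}{z}$, putting $z=ix$ gives $E_0(1,x)=\cosh x+x\,\frac{\sinh x}{x}=e^{x}$. Using in addition $j_{3/2}(z)=\frac{3}{z^{2}}\big(\frac{\sin z}{z}-\cos z\big)$ at $z=ix$, a short computation yields
$$E_1(1,x)=\frac{\sinh x}{x}+\frac{1}{x}\Big(\cosh x-\frac{\sinh x}{x}\Big)=\frac{e^{x}}{x}-\frac{\sinh x}{x^{2}}.$$
With $\lambda=1$ these are exactly the quantities occurring in the corollary.

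Next I would establish $E_1(1,x)\le E_0(1,x)$ for $x>0$. By Theorem 5 the ratio $\nu\mapsto\frac{E_{\nu+1}(1,x)}{E_\nu(1,x)}$ is increasing on $]0,\infty[$; on the other hand $E_\nu(1,x)\to1$ as $\nu\to\infty$ (in (2) one has $j_{\nu-1/2}(ix)\to1$ while $\frac{x}{2\nu+1}j_{\nu+1/2}(ix)\to0$, equivalently $b_0(\nu)=1$ and $b_k(\nu)\to\infty$ for $k\ge1$), so the ratio tends to $1$ at $+\infty$. An increasing function whose limit at $+\infty$ equals $1$ cannot exceed $1$, so passing to the left endpoint (the kernel is continuous in $\nu$ at $0$) gives $\frac{E_1(1,x)}{E_0(1,x)}=\lim_{\nu\to0^{+}}\frac{E_{\nu+1}(1,x)}{E_\nu(1,x)}\le1$, that is $E_1(1,x)\le E_0(1,x)$.

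It then remains to unwind this inequality. Writing $E_1(1,x)\le E_0(1,x)$ as $\frac{e^{x}}{x}-\frac{\sinh x}{x^{2}}\le e^{x}$ and multiplying by $x^{2}>0$ gives $x(1-x)e^{x}\le\sinh x$, whence $(1-x)e^{x}\le\frac{\sinh x}{x}$ for $x>0$, the first assertion. For the second I would apply the first with $-x>0$ in place of $x$: since $\frac{\sinh x}{x}$ is even, $\big(1-(-x)\big)e^{-x}\le\frac{\sinh(-x)}{-x}$ is precisely $(1+x)e^{-x}\le\frac{\sinh x}{x}$ for $x<0$. The only real obstacle is the first step, namely spotting that the opaque functions $(1-x)e^{x}$ and $\frac{\sinh x}{x}$ are repackaged as $E_1(1,x)$ and $E_0(1,x)$; this hinges on the closed forms of the half-integer normalized Bessel functions at imaginary argument and on the index bookkeeping in (2). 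Once these special values are in hand the corollary is an immediate consequence of the monotonicity of $\nu\mapsto E_\nu$, the only secondary point being the justification of the limiting value $1$ and of the passage to the endpoint $\nu=0$, handled through the coefficients $b_k(\nu)$ and continuity in $\nu$.
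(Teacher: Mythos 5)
Your proposal is correct and follows the same route as the paper: identify $E_0(1,x)=e^{x}$ and $E_1(1,x)=\frac{e^{x}}{x}-\frac{\sinh x}{x^{2}}$ via the half-integer normalized Bessel functions at imaginary argument, deduce $E_{\nu+1}/E_{\nu}\leq 1$ from the monotonicity in Theorem 5, and obtain the $x<0$ case by parity. In fact you supply two justifications the paper's proof omits --- that the ratio tends to $1$ as $\nu\to\infty$ (via $b_k(\nu)\to\infty$ for $k\geq 1$), without which "increasing" alone does not give the bound $\leq 1$, and the continuity argument extending the bound to the endpoint $\nu=0$ --- so your version is, if anything, more complete than the one in the text.
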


\begin{proof}
Since the function $\nu\longmapsto \frac{E_{\nu+1}(\lambda,x)}{E_{\nu}(\lambda,x)}$ is increasing on $]0,\infty[$, we get :
$$\displaystyle \frac{E_{\nu+1}(\lambda,x)}{E_{\nu}(\lambda,x)} \leq 1 .$$ 
By the definition of the Dunkl Kernel and since :
$$\displaystyle  j_{\frac{-1}{2}}(ix)=\cosh x,$$
$$\displaystyle  j_{\frac{-1}{2}}(ix)=\frac{\sinh x}{x}, $$
$$\displaystyle  j_{\frac{3}{2}}(ix)= -3(\frac{\sinh x}{x^{3}}-\frac{\cosh x}{x^{2}}) $$
we conclude. 
The second inequality is deduced by parity. 
\end{proof}

\end{document}